\documentclass{article}
\usepackage[utf8]{inputenc}
\usepackage[english]{babel}
\usepackage{graphicx}
\usepackage{amsmath}
\usepackage{amsthm}
\usepackage{amsfonts} 

\bibliographystyle{ieee} 
\usepackage{biblatex}

\addbibresource{MIS4.bib}
\usepackage[bookmarks=false]{hyperref}

\newtheorem{theorem}{Theorem}
\usepackage{verbatim}
\usepackage{authblk}
\usepackage{xcolor} 
\definecolor{LightGray}{gray}{0.9}

\title{Learning (With) Distributed Optimization}
\author[1]{Aadharsh Aadhithya A, Abinesh S, Akshaya J, Jayanth M, Vishnu Radhakrishnan, Sowmya V, Soman K.P}
\affil[1]{Center for Computational Engineering and Networking, School of Computing, Amrita Vishwa Vidyapeetham, Coimbatore}

\date{ 27th January, 2023}

\begin{document}

\maketitle

\section{Introduction}
This article is meant to be an introductory tutorial for Distributed Optimization. The article is structured such that the reader gets enough context to quickly come up to pace with one of the latest Distributed Optimization algorithm: ALADIN.  The material is a by-product of the notes collected by authors while learning distributed optimization. Feel free to contact the authors in case of any mistakes encountered in the article. Have a good read ahead! 
( \href{https://slides.com/aie-24amrita/aladin/fullscreen}{Click here to access the teaching material(PPT) for the tutorial})
\subsection{History of Distributed optimization}
Early works on distributed optimization trace back to Everett Dantzig, Wolfe and Benders in the 1960s. These first works mainly considered Lagrangian relaxation for strictly convex problems and decomposition methods for linear programs. Later, the Lagrangian relaxation was combined with augmented Lagrangian techniques developed mainly by Hestenes, Powell and Miele to improve numerical stability and to provide guarantees also for convex but not strictly convex problems. This led to first versions of ADMM with improved convergence guarantees and improved practical convergence. Many of these duality-based works are summarized textbooks by Bertsekas and Tsitsiklis [BT89] and Censior and Zenios. \\
\par
Distributed optimization gained new interest in the late 2000s mainly in the field of machine learning and imaging science, where ADMM outperformed state-of-the art methods in certain applications. Moreover, new applications in signal recovery emerged. The main motivation here was computational speedup, i.e. to find methods for parallel computing. Moreover, state-of-the-art methods have been shown to be a special case of ADMM allowing their treatment in a unified framework. Duality-based optimization methods were used in communication networks beginning already in the late 1990s. Similar approaches were used in wireless sensor networks, in signal processing and in a few fields of machine learning like Support Vector Machines.
A different version of dual decomposition is established in called proximal center method. Herein—instead of minimizing the augmented Lagrangian in an alternating fashion to achieve separability the author adds two linear proximal terms which are separable and lead to a differentiable dual function. An early dual-decomposition based approach for a multi-agent setting was presented. The highly influential paper showed that many of the above works can be treated in a unified framework based on ADMM. The importance of this framework lies in its generality.

\section{PRE-REQUISITE}

\subsubsection{Gradient}
After learning that functions with a multidimensional input have partial derivatives, you might wonder what the full derivative of such a function is. In the case of scalar-valued multivariable functions, meaning those with a multidimensional input but a one-dimensional output, the answer is the gradient.

\begin{equation}
    \nabla f\left(x_1 ,x_2 ,\ldotp \ldotp \ldotp \ldotp \ldotp ,x_n \right)=\left\lbrack \begin{array}{c}
\frac{\partial f}{\partial x_1 }\\
\frac{\partial f}{\partial x_2 }\\
\ldotp \\
\ldotp \\
\ldotp \\
\frac{\partial f}{\partial x_2 }
\end{array}\right\rbrack
\end{equation}

\subsubsection{Jacobian }
Suppose \( f : \mathbb{R}^n \rightarrow \mathbb{R}^m \) is a function such that each of its first-order partial derivatives exists on \( \mathbb{R}^n \). This function takes a point \( x \in \mathbb{R}^n \) as input and produces the vector \( f(x) \in \mathbb{R}^m \) as output. Then the Jacobian matrix of \( f \) is defined to be an \( m \times n \) matrix, denoted by \( J \).

$\mathbf {J} ={\begin{bmatrix}{\dfrac {\partial \mathbf {f} }{\partial x_{1}}}&\cdots &{\dfrac {\partial \mathbf {f} }{\partial x_{n}}}\end{bmatrix}}={\begin{bmatrix}\nabla ^{\mathrm {T} }f_{1}\\\vdots \\\nabla ^{\mathrm {T} }f_{m}\end{bmatrix}}={\begin{bmatrix}{\dfrac {\partial f_{1}}{\partial x_{1}}}&\cdots &{\dfrac {\partial f_{1}}{\partial x_{n}}}\\\vdots &\ddots &\vdots \\{\dfrac {\partial f_{m}}{\partial x_{1}}}&\cdots &{\dfrac {\partial f_{m}}{\partial x_{n}}}\end{bmatrix}}$

\subsubsection{Hessian}
Suppose \(f:\mathbb{R}^{n} \to \mathbb{R}\) is a function taking as input a vector \(\mathbf{x} \in \mathbb{R}^{n}\) and outputting a scalar \(f(\mathbf{x}) \in \mathbb{R}\). If all second partial derivatives of \(f\) exist, then the Hessian matrix \(\mathbf{H}\) of \(f\) is a square \(n \times n\) matrix,
\[
\mathbf{H}_{f} = \begin{bmatrix}
    \dfrac{\partial^2 f}{\partial x_{1}^{2}} & \dfrac{\partial^2 f}{\partial x_{1} \, \partial x_{2}} & \cdots & \dfrac{\partial^2 f}{\partial x_{1} \, \partial x_{n}} \\
    \dfrac{\partial^2 f}{\partial x_{2} \, \partial x_{1}} & \dfrac{\partial^2 f}{\partial x_{2}^{2}} & \cdots & \dfrac{\partial^2 f}{\partial x_{2} \, \partial x_{n}} \\
    \vdots & \vdots & \ddots & \vdots \\
    \dfrac{\partial^2 f}{\partial x_{n} \, \partial x_{1}} & \dfrac{\partial^2 f}{\partial x_{n} \, \partial x_{2}} & \cdots & \dfrac{\partial^2 f}{\partial x_{n}^{2}}
\end{bmatrix}.
\]

\subsubsection{Positive Semidefiniteness}
A positive semidefinite matrix is defined as a symmetric matrix with non-negative eigenvalues. The original definition is that a matrix \(M \in L(V)\) is positive semidefinite if,
\begin{enumerate}
    \item \(M\) is symmetric, and
    \item \(v^T \cdot M \cdot v \geq 0\) for all \(v \in V\).
\end{enumerate}

\subsubsection{Taylor's Series}
The Taylor series of a function is an infinite sum of terms that are expressed in terms of the function's derivatives at a single point. For most common functions, the function and the sum of its Taylor series are equal near this point. Taylor polynomials are approximations of a function, which generally become better as \(n\) increases. Taylor's theorem gives quantitative estimates of the error introduced by the use of such approximations.

\begin{equation}
   {\displaystyle f(a)+{\frac {f'(a)}{1!}}(x-a)+{\frac {f''(a)}{2!}}(x-a)^{2}+{\frac {f'''(a)}{3!}}(x-a)^{3}+\cdots ,}
   \end{equation}
 In the more compact sigma notation, this can be written as 
 \begin{equation}
     {\displaystyle \sum _{n=0}^{\infty }{\frac {f^{(n)}(a)}{n!}}(x-a)^{n},}
 \end{equation}

\subsubsection{Newton's Raphson's Method}
Newton's method, also known as the Newton–Raphson method, is a root-finding algorithm which produces successively better approximations to the roots (or zeroes) of a real-valued function. The most basic version starts with a single-variable function \( f \) defined for a real variable \( x \), the function's derivative \( f' \), and an initial guess \( x_0 \) for a root of \( f \).

\begin{equation}
    {\displaystyle x_{1}=x_{0}-{\frac {f(x_{0})}{f'(x_{0})}}}
\end{equation}

\begin{equation}
    {\displaystyle x_{n+1}=x_{n}-{\frac {f(x_{n})}{f'(x_{n})}}}
\end{equation}
The process is repeated until a sufficiently precise value is reached
\subsection{Various Norms}
Suppose we’re trying to solve an optimization problem. This means that we are trying to find the best input that minimizes some output penalty.
Norms are a great choice for penalties because they assign a reasonable magnitude to each output. Let us look at the various norms in the following section
\subsubsection{L2-Norm}
The L² norm takes the sum of the squared values, taking the square root at the end. The L² norm is the same as a standard distance formula, which finds the shortest path from A to B. Denoted by $||L||_2$
\subsubsection{Lp-Norm}
If you start to see a pattern, then you might ask: Why stop at 2? We can create a norm for all $1 \leq p < \infty$ in this way too!. This is nothing but the Lp norm.
\subsubsection{Weighted norm}
For a given vector x, and a positive Semidefinite matrix $\Sigma$, weighted norm is defined as
$$
\sqrt{x^T \Sigma x}
$$
Any positve semidefinite matrix can be viewed as a kernel matrix. Hence the above operation can be viewed as a scaled kernel mapping. 

\subsection{Understanding of convex function}
In this section we shall recall the basics of convex optimization like what exactly is a convex set and what is a convex function. Before going into that we need to have a small idea about what an epigraph of a function refers to. Epigraph or super-graph of a function ${f:X\to [-\infty ,\infty ]}$ valued in the extended real numbers ${\displaystyle [-\infty ,\infty ]=\mathbb {R} \cup \{\pm \infty \}}$ is the set, denoted by ${ {epi}\;f,}$ of all points in the Cartesian product ${ X\times \mathbb {R} }$ lying on or above its graph.

\subsubsection{Convex set}
A set C is a convex if every point on the line segment connecting x and y other than the endpoints is inside the topological interior of C. A closed convex subset is strictly convex if and only if every one of its boundary points is an extreme point. This theorem can be stated mathematically as follows, A set $C \epsilon R^{n_x}$ is called a convex if line between any two points $(x_1,x_2)$ lies entirely in $C$ i.e.,
\begin{equation}
    \theta.x_1 + (1-\theta)x_2 \; \epsilon \; C\;\;\; \forall\;\;\; \theta\; \epsilon\; (0,1)
\end{equation}

\subsubsection{Convex function}
A function $f:\chi \Rightarrow R$ is said to be convex if and only if it's epigraph is a convex set. i.e., for all $(x_1,x_2)\; \epsilon \; \chi$ we can say that,

\begin{equation}
    f(\theta.x_1 + (1-\theta)x_2)\; \leq \; \theta.f(x_1) + (1-\theta).f(x_2) \;\;\; \forall \;\;\; \theta\; \epsilon\; (0,1)
\end{equation}

\section{Duality}

According to optimization theory, duality or the principle of duality states that any optimization problems can be viewed from either of the two perspectives, the primal form or the dual form. If the primal is a minimization problem then the dual is a maximization problem and vice-versa. Any feasible solution to the primal problem which is minimization problem, is at least as large as any feasible solution to the dual problem which will be a maximization problem. Therefore, the solution to the primal is an upper bound to the solution of the dual, and the solution of the dual is a lower bound to the solution of the primal. This fact is called weak duality. In general, the optimal values of the primal and dual problems need not be equal. Their difference is called the duality gap. For convex optimization problems, the duality gap is zero under a constraint qualification condition. This fact is called strong duality.

\subsection{Duality Gap}
Duality gap represents the difference between the values obtained on solving the primal form and the solution obtained on solving the dual for of the same optimization problem. If $d*$ is the optimal dual value and $p*$ is the optimal primal value, then the duality gap is equal to $p*-d*$. This value is always greater than or equal to 0 (for minimization problems). The duality gap is zero if and only if strong duality holds. Otherwise the gap is strictly positive and weak duality holds.

\section{Why Dual form?}
Solving an optimization problem in it's dual form is more preferable that solving it in it's dual form because, The dual problem is always a convex optimization problem, even if the problem is not convex in it's primal form.

\begin{theorem}
The dual problem is a convex optimization problem.
\end{theorem}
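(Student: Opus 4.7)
The plan is to show that the dual problem, regardless of the structure of the primal, can always be written as the maximization of a concave function over a convex feasible set, which is by definition a convex optimization problem (equivalently, minimizing its negative).

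First I would set up notation: starting from a generic primal problem $\min_{x} f(x)$ subject to inequality constraints $g_i(x)\le 0$ and equality constraints $h_j(x)=0$, I would form the Lagrangian
\begin{equation*}
L(x,\lambda,\nu) \;=\; f(x) + \sum_i \lambda_i\, g_i(x) + \sum_j \nu_j\, h_j(x),
\end{equation*}
and define the Lagrange dual function $q(\lambda,\nu) = \inf_{x} L(x,\lambda,\nu)$. The dual problem is then $\max_{\lambda\ge 0,\nu} q(\lambda,\nu)$, with feasible set $\{(\lambda,\nu) : \lambda\ge 0\}$.

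Next I would observe the two ingredients that make the dual convex. The feasible set is convex because it is a Cartesian product of a nonnegative orthant (a convex cone) and an unconstrained space, and convex combinations preserve the sign constraint $\lambda\ge 0$; this I would verify using the convex set definition from the prerequisites. The more substantive ingredient is concavity of $q$: for any fixed $x$, the map $(\lambda,\nu)\mapsto L(x,\lambda,\nu)$ is affine in $(\lambda,\nu)$, hence simultaneously convex and concave. I would then invoke the standard fact that the pointwise infimum of an arbitrary family of concave (here, affine) functions is concave, applied to the family $\{L(x,\cdot,\cdot)\}_{x}$. Concretely, for any $\theta\in(0,1)$ and any pair $(\lambda^{(1)},\nu^{(1)}),(\lambda^{(2)},\nu^{(2)})$,
\begin{equation*}
q\bigl(\theta\lambda^{(1)}+(1-\theta)\lambda^{(2)},\,\theta\nu^{(1)}+(1-\theta)\nu^{(2)}\bigr) \;\ge\; \theta\, q(\lambda^{(1)},\nu^{(1)}) + (1-\theta)\, q(\lambda^{(2)},\nu^{(2)}),
\end{equation*}
which follows because $L$ is affine in the dual variables at each $x$ and taking the infimum on the left only decreases the value.

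The main obstacle, if one can call it that, is really just making the infimum-of-affine-functions argument precise, since the infimum may fail to be attained and $q$ may take the value $-\infty$; handling this cleanly requires allowing $q$ to be an extended-real-valued concave function and noting that concavity of the epigraph-style inequality still goes through in that setting (the epigraph language was set up earlier in the excerpt). Once that is dispensed with, the conclusion is immediate: maximizing the concave $q$ over the convex set $\{\lambda\ge 0\}$ is equivalent to minimizing the convex function $-q$ over a convex set, which is a convex optimization problem. Crucially, nowhere in the argument did I use convexity of $f$, $g_i$, or $h_j$, so the statement holds for arbitrary primal problems.
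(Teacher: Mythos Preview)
Your proposal is correct and follows essentially the same approach as the paper: both argue that the dual function is the pointwise infimum of affine functions in the dual variables and hence concave, while the constraint $\lambda\ge 0$ is affine/convex, so the dual is a concave maximization (i.e., convex optimization) problem. Your version is more careful about the extended-real-valued case and spells out the concavity inequality explicitly, but the underlying idea is identical.
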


\begin{proof}
By definition the of duality, given the Lagrangian function of an optimization problem the dual form can be written as in equation \ref{WD1},

\begin{equation}
\label{WD1}
    g(u,v) = inf_x\;\;f(x)+\sum_{i=1}^m \lambda_i. h_i(x) +\sum_{i=1}^n \mu_i. l_i(x)
\end{equation}

where \( \lambda_i \) is the Lagrangian multiplier corresponding to the \( i \)-th inequality constraint \( h_i(x) > 0 \) and \( \mu_i \) is the Lagrangian multiplier corresponding to the \( i \)-th equality constraint \( l_i(x) = 0 \). Here, the equation \ref{WD1} can be viewed as the point-wise infimum of affine functions of \( u \) and \( v \), thus it is concave. \( u \geq 0 \) represents affine constraints. Hence, the dual problem is a concave maximization problem, which is a convex optimization problem.

\end{proof}

\section{When Duality holds?}
Slater's condition (or Slater condition) is a sufficient condition for strong duality to hold for a convex optimization problem, named after Morton L. Slater. Informally, Slater's condition states that the feasible region must have an interior point. The interior of a subset S of a topological space X is the union of all subsets of S that are open in X. A point that is in the interior of S is an interior point of S.

\subsection{Formulation of Slater's theorem}
Consider the optimization problem,

\begin{equation}
\begin{array}{l}
\mathrm{Minimize}\;\;f_0 \left(x\right)\\
\mathrm{subject}\;\mathrm{to}:\\
\;\;\;\;\;\;\;\;f_i \left(x\right)\le 0,i=1,\ldotp \ldotp \ldotp \ldotp \ldotp ,m\\
\;\;\;\;\;\;\;A\ldotp x=b
\end{array}
\end{equation}

where ${\displaystyle f_{0},\ldots ,f_{m}}$ are convex functions. This is an instance of convex programming. In words, Slater's condition for convex programming states that strong duality holds if there exists an ${\displaystyle x^{*}}$ such that ${\displaystyle x^{*}}$ is strictly feasible (i.e. all constraints are satisfied and the nonlinear constraints are satisfied with strict inequalities)
Mathematically, Slater's condition states that strong duality holds if there exists an ${\displaystyle x^{*}\in \operatorname {relint} (D)}$ (where relint denotes the relative interior of the convex set ${\displaystyle D:=\cap _{i=0}^{m}\operatorname {dom} (f_{i})}$ such that

\begin{equation}
\begin{array}{l}
f_i \left(x^* \right)<0\;,\;i=1,\ldotp \ldotp \ldotp \ldotp \ldotp ,m\\
A\ldotp x^* =b
\end{array}
\end{equation}

\subsection{General form of Slater's theorem}
Given an optimization problem,

\begin{equation}
\begin{array}{l}
\mathrm{Minimize}\;\;f_0 \left(x\right)\\
\mathrm{subject}\;\mathrm{to}:\\
\;\;\;\;\;\;\;\;f_i \left(x\right)\le k_i ,i=1,\ldotp \ldotp \ldotp \ldotp \ldotp ,m\\
\;\;\;\;\;\;\;A\ldotp x=b
\end{array}
\end{equation}

where $f_{0}$ is convex and $f_{i}$ is $K_{i}$-convex for each $i$. Then Slater's condition says that if there exists an ${\displaystyle x^{*}\in \operatorname {relint} (D)}$ such that

\begin{equation}
\begin{array}{l}
f_i \left(x^* \right)<k_i \;,\;i=1,\ldotp \ldotp \ldotp \ldotp \ldotp ,m\\
A\ldotp x^* =b
\end{array}
\end{equation}

\section{Dual Ascent}

To understand the algorithmic steps of the Dual Ascent method of optimization let us consider a convex minimization problems with equality constraint as in equation \ref{DAP},

\begin{equation}
\begin{array}{l}
\min \;f\left(x\right)\\
\mathrm{subject}\;\mathrm{to}\;\mathrm{Ax}=b
\end{array}
\label{DAP}
\end{equation}

The Lagrangian function of this minimization problem can be written as mentioned in equation \ref{DAL},

\begin{equation}
L\left(x,\lambda \right)=f\left(x\right)+\lambda^T \left(\mathrm{Ax}-b\right)
\label{DAL}
\end{equation}

 And the dual form of this can be written as mentioned in equation \ref{DAD},

\begin{equation}
g\left(\lambda\right)=\inf_x \;L\left(x,\lambda\right)
\label{DAD}
\end{equation}

Here, the $\lambda$ represents the Lagrangian coefficients of the Primal form. On, writing the optimization problem in dual form we shall observe that these variables have become the primary variables for optimization. Here, inf represents the infinum value of the function. Infinum of a subset `S' of a partially ordered set `P', is a greatest element of P, that is less than or equal to each element of `S', if such an element exists. It is commonly termed as the greatest lower bound of that set.\\

So, the solution which we are supposed obtain at the end of Dual Ascent optimization is as in the following equation,

\begin{equation}
    y= arg max_y\;\; g(y)
\end{equation}

The dual ascent algorithm solves in an iterative manner in order to converge towards this solution using the following steps:

\begin{enumerate}
    \item Initiate `y' as a randomly generated value
    \item find the next iteration value of `x', by solving for g(y) as per the following equation,
    
    \begin{equation}
        x^{k+1} ={\mathrm{argmin}}_x \;L\left(x,y^k \right)
    \end{equation}
    
    \item find the next iteration value of `y' using the gradient descent step in the following equation,
    
    \begin{equation}
        y^{k+1} =y^k +a^k \left({\mathrm{Ax}}^{k+1} -b\right)
        \label{DAY}
    \end{equation}
\end{enumerate}

The main disadvantage of dual ascent is that there are few restrictions in order to apply dual ascent. In order to apply dual ascent for a particular optimization problem, the following conditions must be satisfied,

\begin{enumerate}
    \item the Lagrangian function $L(x,y^k)$ must be a strictly convex function. In case if this condition is not being obeyed, then the updating of $x$ step in the equation might end up having multiple solutions.
    
    \item The Lagrangian function $L(x,y^k)$ must be bounded below.
\end{enumerate}

\section{Dual Decomposition}
\textbf{Formal definition: }Given a linearly separable function $f(x)$ in such a way that it can be decomposed into a set of convex functions from $f(x_1), f(x_2), f(x_3, ...., f(x_n)$, then the Lagrangian function is also linearly separable on $x$. \\

If $f$ is linearly separable on $x$ as in equation \ref{DDF},

\begin{equation}
\label{DDF}
    f\left(x\right)=f_1 \left(x_1 \right)+f_2 \left(x_2 \right)+\ldotp \ldotp \ldotp \ldotp \ldotp +f_N \left(x_N \right)
\end{equation}

Then, the Lagrangian function corresponding to this function is also linearly separable on $x$ as in equation \ref{DDL}

\begin{equation}
\label{DDL}
    \begin{array}{l}
\begin{array}{l}
L\left(x,y\right)=L_1 \left(x_1 ,y\right)+L_2 \left(x_2 ,y\right)+\ldotp \ldotp \ldotp \ldotp \ldotp +L_n \left(x_n ,y\right)-y^T b\\
\mathrm{where},L_i \left(x_i ,y\right)=f_i \left(x_i \right)+y^T A_i x_i 
\end{array}
\end{array}
\end{equation}

Here, the minimization of $x$ will also split into $n$ different problems where, $x_i$ can be computed as in equation \ref{DAXA}

\begin{equation}
    \label{DAXA}
    x_i^{k+1} ={\mathrm{argmin}}_{x_i } \;L_i \left(x_i ,y^k \right)
\end{equation}

This linear separability of a single optimization problem into multiple problems enables us to perform distributed optimization of the objective function. The computation of $x$ value can thus be computed in distributed manner in multiple devices, providing $N$ devices in total for computing the values of $x_1, x_2, x_3, ...., x_N$ in separate devices. Whereas the computation of the $y$ variable or the Lagrangian multiplier takes place in the master device as per the equation \ref{DAYA},

\begin{equation}
    \label{DAYA}
    y^{k+1} =y^k +a^k \left({\sum A_i x_i }^{k+1} -b\right)
\end{equation}

The equation \ref{DAYA} resembles the equation \ref{DAY}, the only difference we can observe is in the $A.x$ where, in Dual decomposition the sum aggregate of all the $A_i x_i$ is performed.

The crux of philosophy of Distributed Computation using the Dual Decomposition Algorithm is as follows,

\begin{enumerate}
    \item Scatter $y^k$ or the vector of Lagrangian multipliers to all the slave nodes.
    \item Compute the $x_i$ vector in the individual nodes.
    \item Gather $A_i.x_i$ from all the individual nodes and update the $y^k$ in the master node
    \item Follow steps 1, 2, 3 until the solution convergence. 
\end{enumerate}

The main thing which is to be noted here that, all the disadvantages or the restrictions of the Dual ascent algorithm applies to this Algorithm also. The main disadvantage is that when the Lagrangian does not have a unique solution this algorithm will oscillate i.e., doesn't converge towards the solution.

\section{Method of Multipliers}
This algorithm solves the oscillation problem of the Dual Ascent algorithm by using Augmented Lagrangian function instead of Lagrangian function i.e., an second order proximal term is added at the end of the usual Lagrangian function. We add a penalty term to avoid the constant violation. This term is always positive, where this term drives our solution towards the point where the $A.x = b$ condition is being obeyed. Hence the Lagrangian function used in this algorithm is as in the equation \ref{MML},

\begin{equation}
    \label{MML}
    L\left(x,y\right)=f\left(x\right)+y^T \left(\mathrm{Ax}-b\right)+\frac{\rho }{2}||A\ldotp x-b||_2^2
\end{equation}

After this change the upcoming steps are as same as the Dual Ascent algorithm. Method of Multipliers is nothing but a combination of the Dual Ascent algorithm and the augmented Lagrangian function. The forthcoming steps to be performed are to initially assume a value for `y', find the next iteration `x' value as per equation \ref{MMX}, find the next iteration value of `y' as per the equation \ref{MMY}.

\begin{equation}
    \label{MMX}
    x^{k+1} ={\mathrm{argmin}}_x \;L\left(x,y^k \right)
\end{equation}

\begin{equation}
    \label{MMY}
    y^{k+1} =y^k +\rho \left({\mathrm{Ax}}^{k+1} -b\right)
\end{equation}

While solving an optimization problem with the Method of Multipliers, we basically want two conditions to hold simultaneously, which are the Primal and dual conditions of feasibility. This can also be obtained by computing the partial derivative of the Lagrangian function with respect to $y$ and $x$ respectively. These condition hold simultaneously only on the solution point. The conditions can be written mathematically as in equation \ref{MMC1} \& \ref{MMC2}
respectively.

\begin{equation}
    \label{MMC1}
    {A\ldotp x}^* -b=0
\end{equation}

\begin{equation}
    \label{MMC2}
    \nabla f\left(x^* \right)+A^T \ldotp y^* =0
\end{equation}

\subsection{How are the Primal and Dual feasibility achieved?}

We know that, $x^{k+1}$ term is computed by minimizing the Lagrangian function by using the previous iteration $x$ value, which is $x^k$, hence the gradient of Lagrangian with respect to $x$ will be zero. The dual feasibility can be be proved by using this fact as basis as in equation \ref{MMP},

\begin{equation}
    \label{MMP}
    \begin{array}{l}
\nabla_x L_p \left(x^{k+1} ,y^k \right)=0\\
\Rightarrow \nabla_x f\left(x^{x+1} \right)+A^T \left(y^k +\rho \left({\mathrm{Ax}}^{k+1} -b\right)\right)\\
\Rightarrow \nabla_x f\left(x^{k+1} \right)+A^T y^{k+1} 
\end{array}
\end{equation}

The dual update \( y^{k+1} = y^k + \rho \left( \mathrm{Ax}^{k+1} - b \right) \) makes the point \( (x^{k+1}, y^{k+1}) \) dual feasible. The primal feasibility is achieved automatically when the term \( \mathrm{Ax}^{k+1} - b \) tends to zero.

\section{ADMM}
The main advantage of ADMM or Alternating Direction Method of Multipliers is that it has inherited two advantageous properties correspondingly from two optimization algorithms discussed earlier, which is the Convergence property of Method of Multipliers and The decomposition property of Dual Decomposition.\\

The ADMM problem formulation can be described as follows, there is an objective function which is sum of two different function in terms of two different variables $x$ and $z$, where, the variables $x$ and $z$ are subject to an equality constraint, which can mathematically expressed as in equation \ref{ADMM-P}.

\begin{equation}
    \label{ADMM-P}
    \begin{array}{l}
\mathrm{objective}\;\mathrm{function}:\min_{x,z} \;f\left(x\right)+g\left(z\right)\\
\mathrm{subject}\;\mathrm{to}:A\ldotp x+B\ldotp z=c
\end{array}
\end{equation}

The Augmented Lagrangian term in for this problem can be written as in equation \ref{ADMM-L},

\begin{equation}
    \label{ADMM-L}
    L_{\rho } \left(x,z,y\right)=f\left(x\right)+g\left(z\right)+y^T \left(\mathrm{Ax}+\mathrm{Bz}-c\right)+\frac{\rho }{2}||\mathrm{Ax}+\mathrm{Bz}-c||_2^2
\end{equation}

The algorithmic steps for optimizing the above described problem using ADMM is as follows,

\begin{enumerate}
    \item Initiate `z' and `y' as a randomly generated values
    \item find the next iteration value of `x' by using the following equation
    
    \begin{equation}
        x^{k+1} ={\mathrm{argmin}}_x L_{\rho } \left(x,z^k ,y^k \right)
    \end{equation}
    
    \item find the next iteration value of `z' using the following equation,
    
    \begin{equation}
    z^{k+1} ={\mathrm{argmin}}_x L_{\rho } \left(x^{k+1} ,z,y^k \right)
    \label{DAY}
    \end{equation}

    \item find the next iteration value of `y' using the following equation,
    \begin{equation}
        y^{k+1} =y^k +\rho \left({\mathrm{Ax}}^{k+1} +{\mathrm{Bz}}^{k+1} -c\right)
    \end{equation}
\end{enumerate}

While solving an optimization problem with ADMM too, we basically want three conditions to hold simultaneously similar to the method of multipliers, which are one Primal and a couple dual conditions of feasibility. This can also be obtained by computing the partial derivative of the Lagrangian function with respect to $x, z$ and $y$ respectively.The conditions can be written mathematically as in equation \ref{ADMM1} and \ref{ADMM2} respectively.

\begin{equation}
    \label{ADMM1}
    \mathrm{Ax}+\mathrm{Bz}-c=0
\end{equation}

\begin{equation}
    \label{ADMM2}
    \begin{array}{l}
\nabla f\left(x\right)+A^T y=0\\
\nabla \;g\left(z\right)+B^T y=0
\end{array}
\end{equation}

These optimality conditions can be proved in the same manner as we did for the method of multipliers by taking the first order derivative of Lagrangian function with respect to each variable.

\section{Parallel ADMM}
Let us consider an optimization (minimization) problem of the following form as in equation \ref{PADMM-P},

\begin{equation}
    \label{PADMM-P}
    \min_x \;f\left(x\right)=\sum_{i=1}^N f_i \left(x\right)
\end{equation}

This can brought into the ADMM formulation as follows as in equation \ref{PADMM-F}

\begin{equation}
    \label{PADMM-F}
    \begin{array}{l}
\min_{x_i ,z} \;\sum_{i=1}^N f_i \left(x_i \right)\\
s\ldotp t\ldotp \;x_i -z=0,i=1,\ldotp \ldotp \ldotp \ldotp ,N
\end{array}
\end{equation}

The augmented Lagrangian function of this ADMM formulation can be written as follows as in equation \ref{PADMM-L1},

\begin{equation}
    \label{PADMM-L1}
    \begin{array}{l}
L_{\rho } \left(x_1 ,\ldotp \ldotp \ldotp \ldotp \ldotp \ldotp x_N ,z,y\right)=\sum_{i=1}^N \;L_i \\

\end{array}
\end{equation}

here, the term  $L_i$ can be expanded as follows as in equation \ref{ADMM-L2}

\begin{equation}
    \label{ADMM-L2}
     L_i =f_i \left(x_i \right)+y_i^T \left(x_i -z\right)+\frac{\rho }{2}||x_i \;-z||_2^2 
\end{equation}

Now, the iterative convergence of this solution involves the convergence of three variables $x, z$ and $y$, where the computation of next iteration value of $x$ i.e., $x_{k+1}$ can be performed in parallel using the equation \ref{PADMM-X}, after which, the updation of variable $z$ must be computed centrally as per the equation \ref{PADMM-Z}, followed by which the computation of $y_{k+1}$ by using $y_k$ must take place in a parallel as per equation \ref{PADMM-Y}.

\begin{equation}
    \label{PADMM-X}
    x_i^{k+1} ={\mathrm{argmin}}_{x_i } \left(f_i \left(x_i \right)+y_i^{\mathrm{kT}} \left(x_i -z^k \right)+\frac{\rho }{2}||x_i \;-z||_2^2 \right)
\end{equation}

\begin{equation}
    \label{PADMM-Y}
    z^{k+1} =\frac{1}{N}\sum_{i=1}^N \left(x_i^{k+1} +\frac{1}{\rho }y^k \right)
\end{equation}

\begin{equation}
    \label{PADMM-Z}
    y_i^{k+1} =y_i^k +\rho \left(x_i^{k+1} -z^{k+1} \right)
\end{equation}

\section{ALADIN}

\begin{figure}[]
\centering5    \includegraphics[width=15cm]{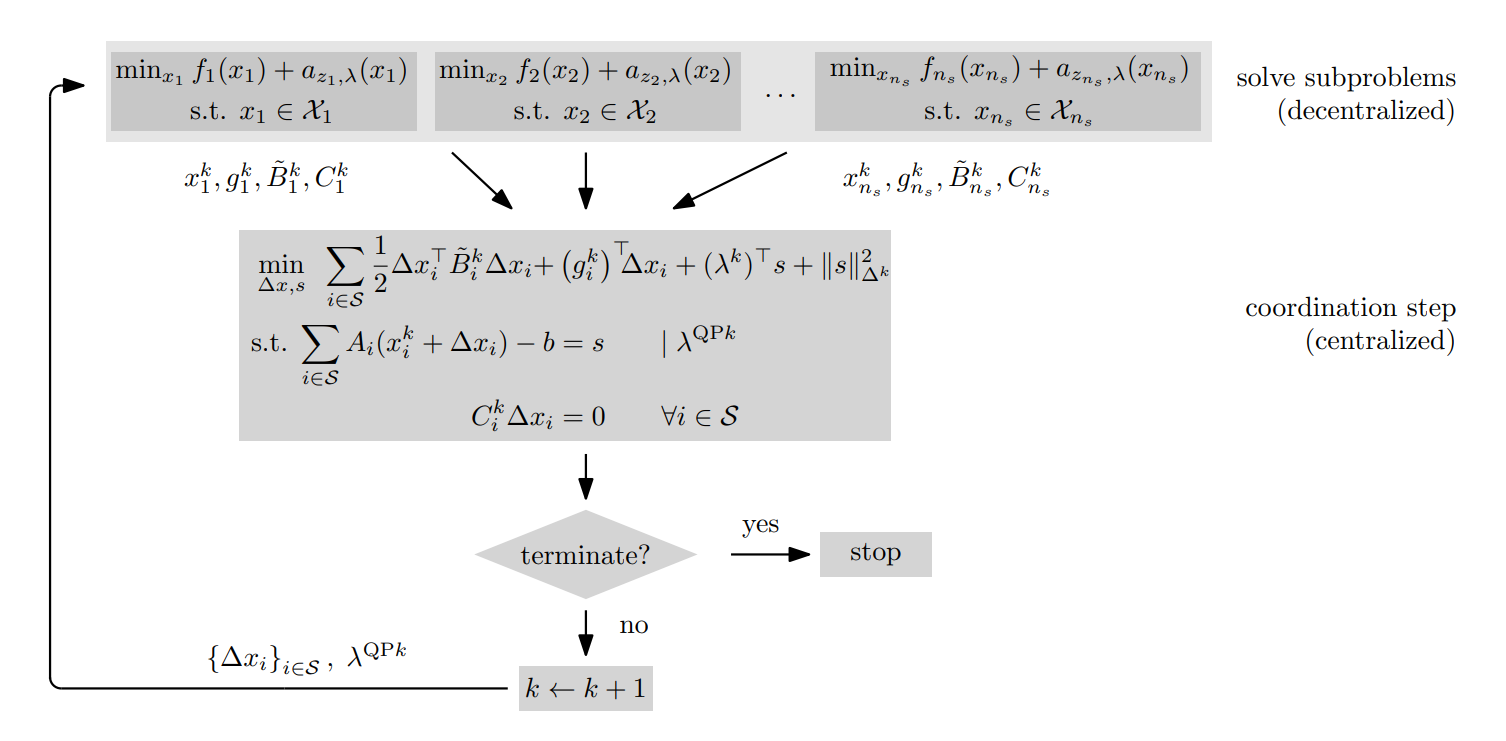}
    \caption{The results of Runtime analysis}
    \label{result}
\end{figure}

The major disadvantages of the algorithms such as ADMM, Dual decomposition and the method of multipliers is that, the convergence of these algorithms are guaranteed only in the case of convex and strictly-convex problems. One of the few algorithms which guarantees convergence for non-convex problems is ALADIN or Augmented Lagrangian Alternating Direction Inexact Newton. In ADMM we introduced an auxiliary variable $z$ into the problem formulation, whereas in ALADIN we do not do that, instead we directly deal with the partial augmented Lagrangian as in equation \ref{ALADIN-LAG} of the objective function with respect to the coupling constraints of the equation \ref{ALADIN-FORM}.

\begin{equation}
    \label{ALADIN-FORM}
    \begin{array}{l}
\min_{x_i ,\ldotp \ldotp \ldotp ,x_R } \;\sum_{i\in R} \;f_i \left(x_i \right)\\
\mathrm{subject}\;\mathrm{to}:\;g_i \left(x_i \right)=0\\
\;\;\;\;\;\;\;\;\;\;\;\;\;\;\;\;\;\;\;\;h_i \left(x_i \right)\le 0\\
\;\;\;\;\;\;\;\;\;\;\;\;\;\;\;\;\;\;\;\;\sum_{i\in R} \;A_i x_i =b
\end{array}
\end{equation}

\begin{equation}
    \label{ALADIN-LAG}
    L_{\rho } \left(x,\lambda \right)=\sum_{i\epsilon R} f_i \left(x_i \right)+l_{x_i } +\lambda^T \left(\sum_{i\epsilon R} A_i x_i -b\right)+\frac{\rho }{2}||\sum_{i\epsilon R} A_i x_i -b\;||_2^2
\end{equation}

In the above equation \ref{ALADIN-LAG} we know that the fist term corresponds to our objective functions, the third term is the penalty term corresponding to the equality constraints $A_i.x_i=b$ and the last term is our augmented Lagrangian term with $\rho$ as step size. Here, the second term of this equation is newly introduced in ALADIN which is a penalty term, where $l_{x_i}$ will take a value $1$ if the variable $x_i$ appears in the constraints, infinity in all other cases.

The further procedures are quite similar to the Method of Multipliers. We shall consider the method of multipliers, but instead of applying a full minimization of $L_\rho$ with respect to $x$ we apply only one equality-constrained SQP step yielding the following optimization formulation which happens in the master node with objective function as in equation \ref{SQP-M-O} and constraints as in equation \ref{SQP-M-C},

\begin{equation}
    \label{SQP-M-O}
    \begin{array}{l}
\min_x \;\sum_{i\epsilon R} \frac{1}{2}\Delta x_i^T B_i^k \Delta x_i +\nabla f_i^T \left(x_i \right)\Delta x_i +\lambda^{\mathrm{kT}} \left(\sum_{i\epsilon R} A_i \left(x+\Delta x_i \right)-b\right)\\
\;\;\;\;\;\;\;\;\;\;\;\;\;\;\;\;\;\;\;\;\;\;\;\;\;\;\;\;\;\;\;\;\;\;\;\;\;\;\;\;\;\;\;\;\;\;\;\;\;\;\;\;\;\;\;\;\;\;\;\;\;\;\;\;\;\;+\frac{\rho }{2}||\sum_{i\epsilon R} A_i \left(x+\Delta x_i \right)-b||
\end{array}
\end{equation}

\begin{equation}
\label{SQP-M-C}
    \begin{array}{l}
\mathrm{subject}\;\;\mathrm{to}:\tilde{g_i } \left(x_i^k \right)+\nabla \tilde{g_i } \left(x_i^k \right)\\
\mathrm{where},\tilde{g_i } {\left(x_i^k \right)}^T =\left(g_i {\left(x_i \right)}^T ,{\left(h_i {\left(x_i \right)}^T \right)}_{A\left(x_i \right)} \right)
\end{array}
\end{equation}

here, ${\left(h_i {\left(x_i \right)}^T \right)}_{A\left(x_i \right)}$ is the set of all active inequality constraints.

Here, the matrix $B_i^k$ is a positive definite approximation of the Hessian of the full Lagrangian as in equation \ref{SQP-M-L-B},

\begin{equation}
    \label{SQP-M-L-B}
    B_i^k =\nabla_{x_i x_i }^2 \left(f_i \left(x_i^k \right)+\gamma_i^T g_i \left(x_i^k \right)+\mu_i^T h_i \left(x_i^k \right)\right)
\end{equation}

For the multiplier update we apply the standard dual ascent step from the method of multipliers as in the equation \ref{SQP-M-LAM}

\begin{equation}
    \label{SQP-M-LAM}
    \lambda^{k+1} =\lambda^k +\alpha^k \left(\sum_{i\epsilon R} A_i x_i^{k+1} -b\right)
\end{equation}

In principle, one could apply this algorithm now to equality-constrained problems. This would yield a very effective algorithm since if $\rho$ is large enough, the QP in equation \ref{SQP-M-O} becomes strongly convex and thus it can be replaced by solving the KKT conditions which is a linear system of equations. However, if inequality constraints are present, the question arises how to obtain the active set $A(x_i)$. An alternative is to consider inequality constraints in lagrangian function itself, but this would make the equation \ref{SQP-M-O} substantially more difficult to solve, since the KKT
conditions also entail inequality constraints in this case.\\

ALADIN uses a different approach, it introduces a local NLP step very similar to step 1 of ADMM. This step reads,

\begin{equation}
\label{11}
    \min_{x_i } \;\sum_{i\epsilon R} \tilde{f_i } \left(x_i \right)+\lambda^{\mathrm{kT}} A_i x_i +\frac{\nu }{2}||x_i -z_i^k ||_{\sum_i }^2
\end{equation}

where $\sum_i$ is a (usually diagonal) positive definite scaling matrix and where we introduce auxiliary variables $z_i$ serving as a second iterate in ALADIN. As an active set for \ref{SQP-M-O}, one can use the active set from the minimization
of problem in equation \ref{11}.

\subsection{SQP algorithm in detail}
One of the most effective methods for non-linearly constrained optimization generates steps by solving quadratic sub-problems. This sequential quadratic programming (SQP) approach can be used both in line search and trust-region frameworks, and is appropriate for small or large problems. Unlike linearly constrained Lagrangian methods, which are effective when most of the constraints are linear, SQP methods show their strength when solving problems with significant non-linearity in the constraints.

\subsubsection{SQP formulation}
Let us consider an equality constrained optimization problem as in equation \ref{SQP-O},

\begin{equation}
    \label{SQP-O}
    \begin{array}{l}
\mathrm{minimize}\;f\left(x\right)\\
\mathrm{subject}\;\mathrm{to}\;c\left(x\right)=0
\end{array}
\end{equation}

where, $f$ is a function from $R^n$ to $R$ and $c$ is a function from $R^n$ to $R^m$. In this problem our main aim is to optimize our objective function by iteratively converging the value of $x$ and simultaneously the value of Lagrangian multipliers corresponding to the constraints. The simplest derivation of SQP methods, which we present now, views them as an application of Newton’s method to the KKT optimality conditions for equation \ref{SQP-O}.\\
Here, we know that the Lagrangian function of this problem can be written as in equation \ref{SQP-L},

\begin{equation}
    \label{SQP-L}
    L\left(x,\lambda \right)=f\left(x\right)-\lambda^T c\left(x\right)
\end{equation}

 We use A(x) to denote the Jacobian matrix of the constraints, that is, as in equation \ref{SQP-J},
 
 \begin{equation}
    \label{SQP-J}
    {A\left(x\right)}^T =\left\lbrack \nabla c_1 \left(x\right),\nabla c_2 \left(x\right),\ldotp \ldotp \ldotp \ldotp \ldotp \ldotp ,\nabla c_m \left(x\right)\right\rbrack
\end{equation}

where $c_i(x)$ is the $i^th$ component of the vector $c(x)$. The first-order (KKT) conditions of this equality-constrained problem can be written as a system of $n + m$ equations in the $n + m$ unknowns $x$ and $\lambda$,

 \begin{equation}
    \label{SQP-KKT}
    F\left(x,\lambda \right)=\left\lbrack \begin{array}{c}
\nabla f\left(x\right)-{A\left(x\right)}^T \lambda \\
c\left(x\right)
\end{array}\right\rbrack
\end{equation}

Any solution \( (x^*, \lambda^*) \) of this equality-constrained problem for which \( A(x^*) \) is a full rank matrix, will satisfy the equation \eqref{SQP-KKT}. One approach that suggests itself is to solve the nonlinear equations by using Newton’s method. The Jacobian of equation \eqref{SQP-KKT} can be written as shown in equation \eqref{SQP-KKT-J}.

 \begin{equation}
    \label{SQP-KKT-J}
    F^{\prime } \left(x,\lambda \right)=\left\lbrack \begin{array}{cc}
\nabla_{\mathrm{xx}}^2 L\left(x,\lambda \right) & -{A\left(x\right)}^T \\
A\left(x\right) & 0
\end{array}\right\rbrack
\end{equation}

The Newton's step for iteration from $(x_k,\lambda_k)$ can be written as follows, as in equation \ref{SQP-N},

 \begin{equation}
    \label{SQP-N}
    \left\lbrack \begin{array}{c}
x_{k+1} \\
\lambda_{k+1} 
\end{array}\right\rbrack =\left\lbrack \begin{array}{c}
x_k \\
\lambda_k 
\end{array}\right\rbrack +\left\lbrack \begin{array}{c}
p_k \\
p_{\lambda } 
\end{array}\right\rbrack
\end{equation}

here, $(p_k,p_\lambda)$ can be computed as follows as in equation \ref{SQP-P},

 \begin{equation}
    \label{SQP-P}
    \left\lbrack \begin{array}{cc}
\nabla_{\mathrm{xx}}^2 L\left(x,\lambda \right) & -{A\left(x\right)}^T \\
A\left(x\right) & 0
\end{array}\right\rbrack \left\lbrack \begin{array}{c}
p_k \\
p_{\lambda } 
\end{array}\right\rbrack =\left\lbrack \begin{array}{c}
-\nabla f\left(x\right)+{A\left(x\right)}^T \lambda \\
-c\left(x\right)
\end{array}\right\rbrack
\end{equation}

This Newton iteration is well-defined when the KKT matrix is non-singular. The KKT matrix is non-singular if the following assumption holds at \( (x, \lambda) = (x_k , \lambda_k ) \),

\begin{enumerate}
    \item The constraint Jacobian \( A(x) \) has full row rank.
    \item The matrix \( \nabla_{xx}^2L(x, \lambda) \) is positive definite on the tangent space of the constraints, that is,
    \( d^T \nabla_{xx}^2L(x, \lambda) d > 0 \) for all \( d \neq 0 \) such that \( A(x)d = 0 \).
\end{enumerate}

The first assumption is the linear independence constraint qualification, that is throughout the algorithm we assume that every constraint is independent of each other. The second
condition holds whenever $(x, \lambda)$ is close to the optimum $(x^*, \lambda^*)$.

\subsection{Alternate Way to Formulate SQP}
There is an alternative way to view the iteration. Suppose that at the iterate \( (x_k, \lambda_k) \) we model the optimization problem using the quadratic program along with Taylor's series approximation as shown in equation \eqref{ASQP-F},

\begin{equation}
    \label{ASQP-F}
    \begin{array}{l}
\min_p \;\;{\;\;f}_k +\nabla f_k^T p+\frac{1}{2}p^T \nabla_{\mathrm{xx}}^2 L_k p\\
\mathrm{subject}\;\mathrm{to}\;\;A_k p+c_k =0
\end{array}
\end{equation}

If Assumptions mentioned above holds, this problem has a unique solution $(p_k,l_k)$ that satisfies the constraints as in equation \ref{ASQP-C},
\begin{equation}
    \label{ASQP-C}
    \begin{array}{l}
\nabla_{\mathrm{xx}}^2 L_k p_k +\nabla f_k -A_k^T l_k =0\;\\
A_k p_k +c_k =0
\end{array}
\end{equation}

The vectors $p_k$ and $l_k$ can be identified with the solution of the Newton equations \ref{SQP-P}. If we subtract $A^T_k.\lambda_k$ from both sides of the first equation in \ref{SQP-P}, we obtain the following as in equation \ref{ASQP-M},

\begin{equation}
    \label{ASQP-M}
    \left\lbrack \begin{array}{cc}
\nabla_{\mathrm{xx}}^2 L_k p_k  & -A_k^T \\
A_k  & 0
\end{array}\right\rbrack \left\lbrack \begin{array}{c}
p_k \\
\lambda_{k+1} 
\end{array}\right\rbrack =\left\lbrack \begin{array}{c}
-\nabla f_k \\
-c_k 
\end{array}\right\rbrack
\end{equation}

\section{Applications of ALADIN}

\subsection{Application in Machine learning}

Here we give a simple classification example how ALADIN-$\alpha$ can be used for machine learning problems. The goal here is finding a suitable parameter $\omega$ that classify the input data into its label.

\subsubsection{Loss Function}
Let $x_j$ be the j-th input data and $y_j$ be its label, let $N$ be the number of input data, let $n_x$ be the dimension of input data, let $\omega$ be the decision variable. We consider a $l_2$-regularized logistic regression loss function as in the equation \ref{APP2-L},

\begin{equation}
    \label{APP2-L}
    \min_{x\epsilon R^d } \;f\left(\omega \right)=\frac{1}{N}\sum_{j=1}^N \log \left(1+e^{-y_i x_j^T \omega } \right)+\frac{\gamma }{2}\;||\omega ||_2^2
\end{equation}

here, we are adding the term $\frac{\gamma }{2}||\omega ||_2^2$ in order to prevent overfitting, where $\gamma$ is a hyperparameter chosen by user.

\subsubsection{Distributed Problem set-up}
To solve this problem, we divide the input data set into several groups and each group specifies a subsystem. We set the number of subsystems $N_{sub}$ to 10, that is, the capacity of each subsystem is $cap = N/N_{sub}$.

Next, we define the decision variable $\omega$ and set up the OCP problem. Because the volume of each group is $cap$ and the dimension of data point is $n_x$, the dimension of $\omega$ should be $cap*n_x$. It is obvious that we can also divide $\omega$ into $cap$ groups, we denote it as $\omega_j, j=1,2,...,cap$. Therefore, following equality constraints as in equation \ref{APP2-I} occurs naturally,

\begin{equation}
    \label{APP2-I}
    \omega_1 =\omega_2 =\ldotp \ldotp \ldotp =\omega_{\mathrm{cap}}
\end{equation}

Note that the form of the objective functions for each subsystem are same, so we define the objective function with parameter $xy$, which represents the input data and their labels for each subsystem.

\begin{verbatim}
w  = SX.sym('w', [cap*nx 1]);
xy = SX.sym('xy', [cap*(nx+1) 1]); 
ff  = 0;
gg = [];
for i = 1:cap
    j   = (i-1)*nx;
    k   = (i-1)*(nx+1);
    ff  = ff + 1/N*log(1+exp(xy(k+nx+1)*xy(k+1:nx)'*w(j+1:nx))) +...
            gamma/(2*N)*w(j+1:nx)'*w(j+1:nx); % objective function
    if i > 1
        for p = 1:nx
            gg = [gg; w(p)-w(j+p)]; % equality constraint
        end
    end
end
\end{verbatim}

Next, we construct the consensus matrix ${A_i}, i=1,2,...,N_{sub}$

\begin{verbatim}
eyebase = eye(nx*cap);
zerobase = zeros(nx*cap);
AA{1} = repmat(eyebase, Nsubs-1, 1);
for i = 2:Nsubs
    AA{i} = [repmat(zerobase,i-2,1);eyebase;repmat(zerobase,Nsubs-i,1)];
end
\end{verbatim}

In the last step, we convert the CasADi symbolic expressions to the MATLAB functions and set up the initial guess $z_i^0$ and $\lambda_0$. Note that the objective function is parameterized with input data $xy$ and the constraints for each subsystem are the same.

\begin{verbatim}
for i = 1:Nsubs
    ML.locFuns.ffi{i} = Function(['f' num2str(i)], {w,xy}, {ff}); 
    ML.locFuns.ggi{i} = Function(['g' num2str(i)], {w,xy}, {gg});
    ML.locFuns.hhi{i} = Function(['h' num2str(i)], {w,xy}, {[]});
    ML.AA{i} = AA{i};
    ML.zz0{i} = zeros(nx*cap,1);
    ML.p{i}   = reshape(set((i-1)*cap + 1:cap, :)', [], 1);
end
\end{verbatim}

\subsubsection{Solution using ALADIN-$\alpha$}
To solve this distributed problem with ALDIN-$\alpha$, we still need to set up some options.

\begin{verbatim}
opts.rho = 1e3;
opts.mu = 1e4;
opts.maxiter = 10;
opts.term_eps = 0; 
opts.plot = 'true';
sol_ML = run_ALADIN(ML,opts);
\end{verbatim}

If the option ``plot" is ``true", we can see the figure \ref{result2} which shows that algorithm converges in about 5 iterations, which is quite fast.

\begin{figure}[]
\label{result2}
\centering
    \includegraphics[width=13cm]{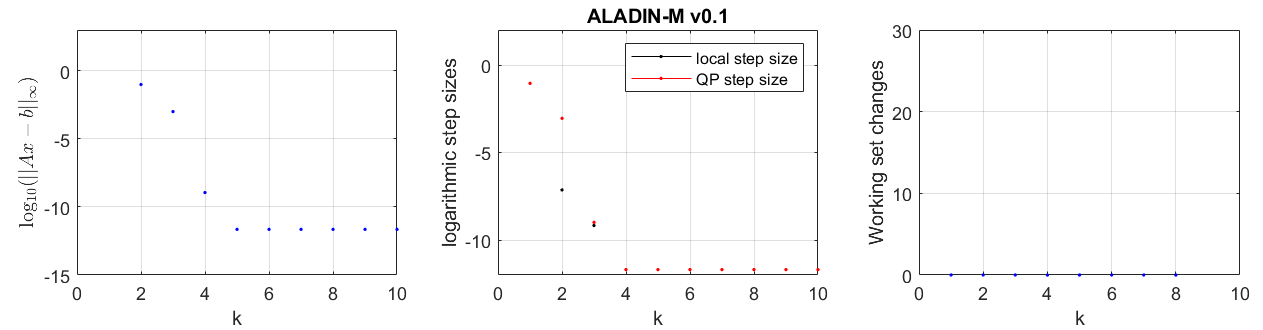}
    \caption{The graph that shows convergence of the solution}
    \label{result}
\end{figure}

\subsection{Sensor Localization}
Here we consider a sensor network localization problem from the SIAM ALADIN paper. We illustrate, how the ``parfor" option of ALADIN-$\alpha$ can be used for parallel execution. For this example the MATLAB parallel computing toolbox is required.

\subsubsection{Problem Set-up}
\subsubsection{Problem Set-up}
Let \( N \) be the number of sensors, and let \( X_i = (x_i, y_i)^T \in \mathbb{R}^2 \) be the unknown position of the \( i \)-th sensor. Let \( \eta_i \) be the estimated position, and let \( \zeta_i \) be the position of sensor \( i+1 \) as estimated by sensor \( i \). The measurement error is given by \( \eta_i - X_i \) and is assumed to be Gaussian distributed with variance \( \sigma_i^2 I_{2 \times 2} \). We further denote the measured distance between sensor \( i \) and sensor \( i+1 \) by \( \overline{\eta}_i \). If we define the decision variable as \( x_i = (X_i^T, \zeta_i^T) \in \mathbb{R}^4 \), then we can formulate the overall problem as shown in equation \eqref{APP1-O},

\begin{equation}
    \label{APP1-O}
    \begin{array}{l}
\min_x \sum_{i=1} f_i \left(x_i \right)\\
s\ldotp t\ldotp {\;\;\;\;\;h}_i \left(x_i \right)\le 0\;\;\;\;\;\forall \;\;\;i\epsilon \left\lbrace 1,\ldotp \ldotp \ldotp \ldotp \ldotp \ldotp \ldotp ,N\right\rbrace \\
\;\;\;\;\;\;\;\;\;\;\;\;\;\xi_i =x_{i+1} \;\;\;\;\;\;\;\forall \;\;\;i\epsilon \left\lbrace 1,\ldotp \ldotp \ldotp \ldotp \ldotp \ldotp \ldotp ,N\right\rbrace \\
\mathrm{with},\\
f_i \left(x_i \right)=\frac{1}{4\sigma_i^2 }||X_i -n_i ||_2^2 \;+\frac{1}{4\sigma_{i+1}^2 }||\xi_i \;-\eta_{i+1} ||_2^2 +\frac{1}{2\sigma_{i+1}^2 }{\left(||X_i -\xi_i ||_2^2 -\bar{\eta_i } \right)}^2 \\
h_i \left(x_i \right)={\left(||X_i -\xi_i ||_2^2 -\bar{\eta_i } \right)}^2 -\overline{\sigma_i^2 } 
\end{array}
\end{equation}

\subsubsection{Implementation}
For the implementation, firstly the problem needs to be defined in a way that is compatible to ALADIN-$\alpha$. The definitions of variables and functions are executed in separate functions: for the computation of $\eta_i$ and $\overline\eta_i$ the sensors are assumed to be equidistantly located in a circle, as in equation \ref{AAP1-I}

\begin{equation}
    \label{APP1-I}
    X_i =\left(N\;\cos \left(\frac{2i\pi }{N}\right),N\;\sin \left(\frac{2i\pi }{N}\right)\right)
\end{equation}

The measurement errors are assumed to be normal distributed with variance sigma. The neighbour of sensor $n$ is assumed to be sensor $1$, thus we define $\eta_{n+1} = \eta_1$. We can code the same as follows,
\\
\begin{verbatim}

function [eta,eta_bar] = getEta(N, d, sigma)
 eta     = zeros(d, N + 1);
 eta_bar = zeros(1, N);
 for i = 1 : N
     eta( :, i ) = [N * cos(2 * i * pi / N) + normrnd(0, sigma) ; ...
                    N * sin(2 * i * pi / N) + normrnd(0, sigma)];
     eta_bar(i)  = 2 * N * sin( pi / N) + normrnd(0, sigma);
 end
 eta(:, N + 1) = eta(:, 1);
end

\end{verbatim}

Implementing $f$ and $h$ as above we obtain,

\begin{verbatim}
function [F] = getObjective(N, y, eta, eta_bar, sigma)
F = zeros(N, 1);
F = sym(F);
F(:) = 1/(4*sigma^2)*((y(1,:)-eta(1,1:N)).^2+(y(2,:)-eta(2,1:N)).^2)+
     1/(4*sigma^2)*((y(3,:)-eta(1,2:end)).^2+(y(4,:)-eta(2,2:end)).^2)+
     1/(2*sigma^2)*(sqrt((y(1,:)-y(3,:)).^2+(y(2,:)-y(4,:)).^2)-eta_bar(:)').^2;
end
\end{verbatim}

\begin{verbatim}
function [H] = getInequalityConstr(N, y, eta_bar)
 H = zeros(N, 1);
 H = sym(H);
 H(:) = (sqrt((y(1, :)-y(3, :)).^2 + (y(2, :)-y(4,:)).^2)-eta_bar(:)').^2;
end
\end{verbatim}

The coupling condition $\zeta_i = \eta_{i+1}$ can be formulated as $\sum A_i.x_i=0$ with $A_i$ taking values as in equation \ref{APP1-A},

\begin{equation}
    \label{APP1-A}
    \begin{array}{l}
A_1 =\left\lbrack \begin{array}{cc}
0 & I\\
0 & 0\\
0 & 0\\
\ldotp  & \ldotp \\
\ldotp  & \ldotp \\
\ldotp  & \ldotp \\
0 & 0\\
-I & 0
\end{array}\right\rbrack ,{\;\;A}_2 =\left\lbrack \begin{array}{cc}
-I & 0\\
0 & I\\
0 & 0\\
\ldotp  & \ldotp \\
\ldotp  & \ldotp \\
\ldotp  & \ldotp \\
0 & 0\\
0 & 0
\end{array}\right\rbrack ,{\;\;\;A}_3 =\left\lbrack \begin{array}{cc}
0 & 0\\
-I & 0\\
0 & I\\
\ldotp  & \ldotp \\
\ldotp  & \ldotp \\
\ldotp  & \ldotp \\
0 & 0\\
-I & 0
\end{array}\right\rbrack ,\ldotp \ldotp \ldotp \ldotp ,A_N =\left\lbrack \begin{array}{cc}
0 & 0\\
0 & 0\\
0 & 0\\
\ldotp  & \ldotp \\
\ldotp  & \ldotp \\
\ldotp  & \ldotp \\
-I & 0\\
0 & I
\end{array}\right\rbrack \\
\\
\mathrm{while}\;\mathrm{setting},\;I=\left\lbrack \begin{array}{cc}
1 & 0\\
0 & 1
\end{array}\right\rbrack 
\end{array}
\end{equation}
\\
In Matlab this can be implemented as follows:
\begin{verbatim}
function [AA] = getCouplingMatrix(N, n)
 I = [1, 0; 0, 1];
 A0 = zeros(2*N, n);
 A1 = A0;
 A1(1:2, 3:4) = I;
 A1(2*N - 1: 2*N, 1:2) = -I;
 AA(1) = mat2cell(A_1, 2 * N, n);
 for i = 2 : 1 : N
    A_i = A0;
    A_i(2*(i-2) + 1 : 2*(i-2) + 2, 1:2) = -I;
    A_i(2*i - 1: 2*i, 3:4 ) = I;
    AA(i) = mat2cell(A_i, 2*N, n);
 end
end
\end{verbatim}

A start vector can be defined similarly to the estimated positions:
\begin{verbatim}
function [zz0] = getStartValue(N, sigma)
 initial_position = zeros(2, N);
 for i = 1  : N
    initial_position(1, i) = N * cos( 2 * i * pi / N ) + normrnd(0, sigma);
    initial_position(2, i) = N * sin( 2 * i * pi / N ) + normrnd(0, sigma);
 end
 zz0 = cell(1, N);
 for i = 1 : N-1
    zz0(i) = {[initial_position(:, i); initial_position(:, i + 1)]};
 end
 zz0(N) = {[initial_position(:, N); initial_position(:, 1)]};
end
\end{verbatim}

Such that the overall problem can be set up with the following function:
\begin{verbatim}
function [sProb ] = setupSolver(N, sigma)
n = 4;             
d = 2;   
 y = sym('y%d%d', [N n], 'real');
 y = y';
 [eta, eta_bar] = getEta(N, d, sigma);          
 F   = getObjective(N, y, eta, eta_bar, sigma); 
 H   = getInequalityConstr(N, y, eta_bar);      
 AA  = getCouplingMatrix(N, n);                
 zz0 = getStartValue(N, sigma);  
sProb.llbx = cell(1, N);
sProb.uubx = cell(1, N);
for i = 1 : N
    sProb.llbx(i) = mat2cell([-inf; -inf; -inf; -inf], 4, 1);
    sProb.uubx(i) = mat2cell([ inf;  inf;  inf;  inf], 4, 1);
end
sProb.locFuns.ffi          = cell(1, N);
sProb.locFuns.hhi          = cell(1, N);
for i = 1 : N
    sProb.locFuns.ffi(i) = {matlabFunction(F(i), 'Vars', {y(:, i)})} ;
    sProb.locFuns.hhi(i) = {matlabFunction(H(i), 'Vars', {y(:, i)})} ;
end
sProb.AA = AA;
sProb.zz0 = zz0;
\end{verbatim}

\subsubsection{Runtime Analysis}

For the runtime analysis, the idea is to ŕun the sensor network localization problem with varying number of sensors both with a decentral and a central optimization step. To do so, firstly a vector with a number of sensors is needed and secondly a vector with variances. Then, the time needed for the decentral and the central optimization is measured and can be plotted.

\begin{verbatim}
N = [5, 10, 15 , 20, 25, 30, 35, 40, 50, 60, 70, 80, 90, 100];
sigma = [0.5, 1, 1.5, 2, 2.5, 2.5, 2.5, 2.5, 2.5, 2.5, 2.5, 2.5, 2.5, 2.5];
time = zeros(2, length(N));
for i = 1 : length(N)
    sProb = setupSolver(N(i), sigma(i));
    opts.parfor = 'true' 
    time_parfor = tic;
    sol = run_ALADINnew(sProb, opts);
    time(1, i) = toc(time_parfor);
    time_for = tic;
    sol = run_ALADINnew(sProb, opts);
    time(2, i) = toc(time_for);
end
figure 
plot(N, time(1, :))
title('runtime analysis')
hold on
plot(N, time(2, :))
hold off
legend('decentral optimization', 'central optimization')
\end{verbatim}
The result of this runtime analysis can be observed as in figure \ref{result}

\begin{figure*}[htbp] 
    \centering
    \includegraphics[width=0.8\textwidth]{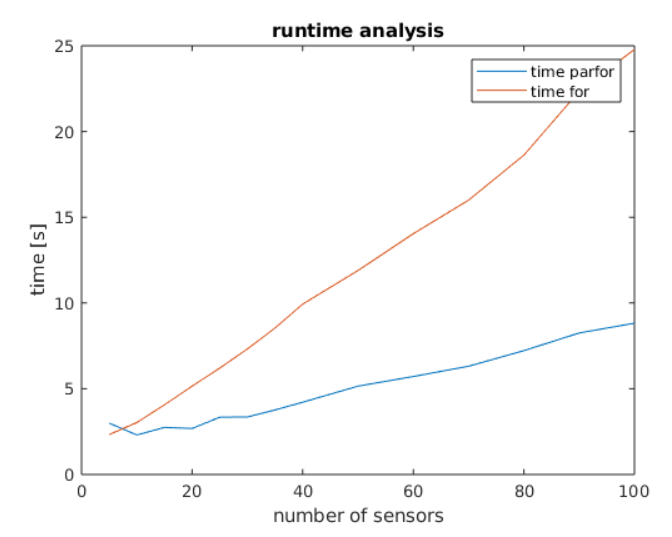} 
    \caption{The results of Runtime analysis}
    \label{result}
\end{figure*}

Thus, for the case of the senor network localization problem a significant runtime improvement can be observed when the parfor option is set. Nonetheless it needs to be mentioned that an improvement on the runtime cannot always be achieved for every problem setup using the parfor option. In general, parfor is useful when the number of local optimization problems is large and the time for solving each of the local optimization problems is relatively long.

\section{Conclusion}
Distributed optimization is a powerfull toolbox to deal with large data sets and real world problems. In this project, we explored existing methdos for distributed optimization and a state of the art method, ALADIN which guarantees convergence even for non convex objective and constraints. The learning outcomes of the project are as follows
\begin{itemize}
    \item Appreciating techniques of distributed optimization for dealing with large real world problems.
    \item Appreciating Decomposablity of functions
    \item Working knowledge of state of the art method known as ALADIN.

\end{itemize}

In future, We would like to extend the promising algorithm, ALADIN to non-convex optimization problems like Optimal Flow control, Traffic control in smart cities, continuous relaxation of combinatorial optimization, etc.

\nocite{*}
\printbibliography
\end{document}